\renewcommand{\@evenhead}{%
  \footnotesize \hfill \leftmark\hfill\thepage 
}
\renewcommand{\@oddhead}{%
  \footnotesize \thepage\hfill \rightmark \hfill
}
\theoremstyle{definition}
\newtheorem{theorem}{Theorem}[section]
\newtheorem{proposition}[theorem]{Proposition}
\begin{document}
%
\title[Transformation from integral operator with separable kernel to matrix in eigenvalue problem]{Transformation from integral operator with separable kernel to matrix in eigenvalue problem}
%
\author[Soma Hirai]{Soma Hirai \orcidlink{0009-0005-3496-7214}}
\address{
Graduate School of Life and Environmental Sciences, Kyoto Prefectural University\\
Kyoto 606-8522\\
Japan}
\email{s825632026@kpu.ac.jp}
%
\author[Ryoto Watanabe]{Ryoto Watanabe \orcidlink{0000-0002-5758-9587}}
\address{Unaffiliated\\
Kyoto\\
Japan}
\email{watanabe.ryoto.g31@kyoto-u.jp}
%
\author[Yuki Nishida]{Yuki Nishida}
\address{Graduate School of Life and Environmental Sciences, Kyoto Prefectural University\\
Kyoto 606-8522\\
Japan}
\email{y-nishida@kpu.ac.jp}
%
\author[Masashi Iwasaki]{Masashi Iwasaki \orcidlink{0000-0002-9914-0331}}
\address{Graduate School of Life and Environmental Sciences, Kyoto Prefectural University\\
Kyoto 606-8522\\
Japan}
\email{imasa@kpu.ac.jp}
%
\subjclass{Primary 47G10 and 47A75, Secondary 65F15}
\keywords{Integral operator, Eigenvalue, Eigenfunction, Separable kernel}
\date{}
\dedicatory{}
%
\begin{abstract}
This paper investigates the eigenvalue problem of integral operators whose kernels can be expressed as a finite sum of pairwise products of single-variable functions, 
making them separable. 
By considiering the matrix form of the separable kernel in the integral operator,  
we establish the relationship between the eigenvalues and eigenfunctions of the integral operator and the eigenpairs of a matrix.
We next generalize the eigenfunction of an integral operator based on the concept of generalized eigenvectors of matrices, 
and show that solving the Fredholm integral equation of the second kind reduces to computing matrix eigenpairs and generalized eigenvectors. 
We also provide several examples to validate our results. 
\end{abstract}
\maketitle
%
\section{Introduction}
\label{sec1}
Some integral equations appear in the analysis of various scientific fields as electromagnetism and fluid dynamics. 
One of the famous equations is the Fredholm integral equation \cite{Fredholm1903}, 
which is a linear equation with an integral over a closed domain and its solution is a function. 
The Fredholm integral equation is widely studied in relation to boundary value problems for elliptic partial differential equations \cite{Atkinson1990}. 
Actually, we can solve boundary value problems in Laplace's and Poisson's equations, 
which are fundamental partial differential equations in physics and engineering that describe steady-state phenomena and potentials, 
by solving the Fredholm integral equations instead of the original problems. 
In this process, the Green's function method is used to represent unit impulse response of a differential operator \cite{Atkinson1990}. 
So is also in the Love's equation that can determine the surface charge density on two coaxial, oppositely charged conducting disks of equal radius \cite{Love1949}.
Similarly, we can relate solving boundary value problems in other differential equations to solving the Fredholm integral equations. 
The Lippmann--Schwinger equation is a special case of the Fredholm integral equation, 
which describes two body interactions of atoms, photons or any other particles \cite{LippmannSchwinger1950}.
An integrand in the Lippmann-Schwinger equation corresponds to the wave function associated with an energy eigenvalue, 
which is the scalar value representing the total energy of a particle in a quantum mechanical system.
\par
The collocation method \cite{Prenter1973} is a numerical technique for solving the Fredholm integral equation, 
and 'collocation' refers to a set of points at which the Fredholm integral equation holds.
In the collocation method, the solution to the Fredholm integral equation is approximated by a linear combination of basis functions 
such that the Fredholm integral equation holds at collocation. 
Another method is the Galerkin method \cite{Ikebe1972} that also approximates the solution 
by a linear combination of basis functions. The linear combination are determined 
such that the difference between the left and right sides of the Fredholm integral equation is orthogonal to all basis functions.
Moreover, the Nystr\"om method \cite{Nystrom1930} transforms the Fredholm integral equation into a system of linear equations 
by directly discretizing the functions in the equation. 
\par
The separable kernel is a function expressed by a summation of separable two-variable functions \cite{Atkinson1997}. 
According to Atkinson \cite{Atkinson1997}, the Fredholm integral equation with the separable kernel can be related to a system of algebraic equations.
By the way, the homogeneous version of the Fredholm integral equation regarded as the eigenvalue equation of an integral operator \cite{karlin1964existence}. 
In other words, solving the Fredholm integral equation can be transformed to computing eigenvalues and eigenfunactions of the integral operator.
In this paper, by focusing on the case where the integral operator involving the separable kernel,  
we simplify finding eigenvalues and eigenfunctions of the integral operator as computing eigenvalues and eigenvectors of a matrix. 
Since generalized eigenvectors are considered in the case where some eigenvalues are multiple, 
we also propose the associated valuable functions for the integral operator.  
Moreover, we present the application to solving the Fredholm integral equation. 
As in the matrix case, we hereinafter refer to a pair of an eigenvalue and the corresponding eigenfunction as an eigenpair of the integral operator.
\par
The remainder of this paper is organized as follows. 
In Section \ref{sec2}, we first clarify a relationship of the integral operator with a separable kernel to a matrix in terms of eigenpairs. 
In Section \ref{sec3}, next consider a variant of the eigenfunction associated with matrix generalized eigenvectors, 
and show that the Fredholm integral equation can be solved based on computing matrix eigenpairs and generalized eigenvectors. 
In Section \ref{sec4}, we present some numerical examples  to confirm our results. 
Finally, in Section \ref{sec5}, we provide concluding remarks.
%
\section{Eigenpair relationship of integral operator and matrix}
\label{sec2}
In this section, we first explain how to represent separable kernels as in the quadratic forms. 
We next relate integral operators involving a separable kernels to matrices in terms of eigenpairs. 
\par
According to Atkinson \cite{Atkinson1997}, a two-variable function $K(x,y)$ is called the separable kernel 
if it can be expressed using single-variable functions $P_i(x)$ and $Q_i(y)$ as
\[
K(x,y)\coloneqq\sum_{i=1}^RP_i(x)Q_i(y),
\]
where $P_i(x)$ and $Q_i(y)$ are not singular on the domain ${\mathcal D}\subset\mathbb{R}$.
Obviously, for $P_i(x)$ and $Q_i(x)$, there are two sequences of linearly independent functions, denoted by $\{p_i(x)\}_{i=1}^N$ and $\{q_i(y)\}_{i=1}^M$, such as 
\begin{align*}
&P_i(x)=\sum_{j=1}^Nw_{i,j}p_j(x),\\
&Q_i(y)=\sum_{j=1}^Me_{i.j}q_j(y).
\end{align*}
Thus it follows that 
\begin{equation}
K(x,y)=\sum_{i=1}^N\sum_{j=1}^Ma_{i,j}p_i(x)q_j(y),
\label{eq:SeparableKernel}
\end{equation}
where $a_{i,j}\coloneq\sum_{k=1}^R w_{k,i} e_{k,j}$. 
Equation \eqref{eq:SeparableKernel} is the quadratic form of the separable kernel $K(x,y)$ associated with a matrix $A=(a_{i,j})$. 
Therefore we can relate $N$-times differentiable functions $F(x,y)$  to the  separable kernel $K(x,y)$. 
This is because the finite Taylor expansion of the two-variable function $F(x,y)$ is given by
\[
F(x,y)=\sum_{0\leq i+j\leq N}\dfrac{\partial^{i+j}F(0,0)}{\partial x^i\partial y^j}x^iy^j+O\left(\sqrt{(x^2+y^2)^N}\right).
\]
The replacement of $x^i$, $y^j$, $M$ and $\partial^{i+j}F(0,0)/(\partial x^i\partial y^j)$ with $p_i(x)$, $q_j(y)$, $N$ and $a_{i,j}$, respectively, 
in the two-variable function $F(x,y)$ lead to the quadratic form \eqref{eq:SeparableKernel}, except for the term $O(\sqrt{(x^2+y^2)^N})$. 
In other words, under the assumption that the term $O(\sqrt{(x^2+y^2)^N})$ has only a small effect on the two-variable function $F(x,y)$, 
we can approximate the two-variable function $F(x,y)$ using appropriate $p_i(x)$, $q_j(y)$ and $a_{i,j}$ as in the quadratic form \eqref{eq:SeparableKernel}. 
Since $O(\sqrt{(x^2+y^2)^N})=0$ in the case where the two-variable function $F(x,y)$ is a two-variable polynomial, 
we can exactly rewrite the two-variable polynomial as in the quadratic form \eqref{eq:SeparableKernel}.
\par
We now introduce the integral operator $T_K$ acting on a function $f(x)$ defined by
\begin{equation}
T_Kf(x)\coloneqq\int_{\mathcal D}K(x,y)f(y)w(y)dy,
\label{eq:IntegralOperator}
\end{equation}
where $w(x)$ is an arbitrary weight of the integral 
such that \eqref{eq:IntegralOperator} converges for any function $f(x)$ and $\mathcal{D}$ is a closed interval for integration.
Thus, it follows from \eqref{eq:SeparableKernel} and \eqref{eq:IntegralOperator} that
\begin{equation}
T_Kf(x)=\sum_{i=1}^Np_i(x)\sum_{j=1}^Ma_{i,j}\langle f, q_j\rangle,
\label{eq:Integraloperator_eq2}
\end{equation}
where $\langle f, q_i\rangle\coloneqq\int_{\mathcal D}f(x)q_i(x)w(x)dx$.
Therefore we can regard the definite integral $T_Kf(x)$ as the linear combination of $p_1(x),p_2(x),\ldots,p_N(x)$. 
Preparing an $M$-by-$N$ matrix $B:=(\langle p_i,q_j\rangle)$ and considering the case where $f(x)$ can be expressed using complex $c_i$ as
\[
f(x)=\sum_{i=1}^N c_i p_i(x), 
\]
we can rewrite \eqref{eq:Integraloperator_eq2} as 
\begin{equation}
T_Kf(x)=\bm{p}(x)AB\bm{c},
\label{eq:Integraloperator_eq3}
\end{equation}
where $\bm{p}(x)\coloneqq(p_1(x),p_2(x),\ldots,p_N(x))$ and $\bm{c}:=(c_1,c_2,\ldots,c_N)^\top$.
\par
We turn to the eigenvalue equation of the integral operator $T_K$ \cite{karlin1964existence} defined by
\begin{equation}
T_K\phi(x)=\lambda\phi(x),
\label{eq:EigenValueEquation}
\end{equation}
where $\lambda$ and $\phi(x)$ are called the eigenvalue of the integral operator $T_K$ and its corresponding eigenfunction, respectively. 
Using \eqref{eq:Integraloperator_eq3}, we can rewrite \eqref{eq:EigenValueEquation} as
\[
\bm{p}(x)(AB\bm{v}-\lambda\bm{v})=0,
\]
where $\bm{v}\in\mathbb{C}^N$.
Considering $p_1(x),p_2(x),\ldots,p_N(x)$ are linearly independent, we can easily derive
\[
AB\bm{v}=\lambda\bm{v}.
\]
To sum up, we have a relationship of the eigenpairs between the integral operator $T_K$ and the matrix product $AB$. 
%
\begin{theorem}\label{thm2.1}
The eigenvalues of the integral operator $T_K$ coincide with those of the matrix product $AB$. 
Moreover, the corresponding eigenfunctions $\phi(x)$ can be expressed 
using the eigenvectors $\bm{v}$ of the matrix product $AB$ as $\phi(x)=\bm{p}(x)\bm{v}$. 
\end{theorem}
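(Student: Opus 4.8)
The plan is to prove the two assertions simultaneously, reading them as the statement that the linear map $T_K$, once we know from \eqref{eq:Integraloperator_eq2} that it always lands in the $N$-dimensional space $V\coloneqq\operatorname{span}\{p_1,\dots,p_N\}$, acts on $V$ in the basis $\bm{p}(x)=(p_1(x),\dots,p_N(x))$ exactly as the matrix $AB$ acts on $\mathbb{C}^N$ under the coordinate identification $\bm{p}(x)\bm{v}\leftrightarrow\bm{v}$; this identification is a linear isomorphism precisely because $p_1,\dots,p_N$ are linearly independent. Everything then reduces to the identity \eqref{eq:Integraloperator_eq3} together with this isomorphism.

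First I would check the direction ``eigenvalue of $AB$ $\Rightarrow$ eigenvalue of $T_K$''. Given $\bm{v}\in\mathbb{C}^N\setminus\{\bm{0}\}$ with $AB\bm{v}=\lambda\bm{v}$, set $\phi(x)\coloneqq\bm{p}(x)\bm{v}$; then $\phi\not\equiv0$ since the $p_i$ are linearly independent. Taking $\bm{c}=\bm{v}$ in \eqref{eq:Integraloperator_eq3} gives $T_K\phi(x)=\bm{p}(x)AB\bm{v}=\lambda\,\bm{p}(x)\bm{v}=\lambda\phi(x)$, so $(\lambda,\phi)$ is an eigenpair of $T_K$ of precisely the asserted form.

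Next I would check ``eigenvalue of $T_K$ $\Rightarrow$ eigenvalue of $AB$''. Let $(\lambda,\phi)$ satisfy \eqref{eq:EigenValueEquation} with $\phi\not\equiv0$ and, for now, $\lambda\neq0$. By \eqref{eq:Integraloperator_eq2} we have $T_K\phi\in V$, hence $\phi=\lambda^{-1}T_K\phi\in V$, so $\phi(x)=\bm{p}(x)\bm{v}$ for a unique $\bm{v}\in\mathbb{C}^N$, with $\bm{v}\neq\bm{0}$ because $\phi\not\equiv0$. Substituting this into \eqref{eq:Integraloperator_eq3} and comparing with $\lambda\phi(x)=\bm{p}(x)(\lambda\bm{v})$ yields $\bm{p}(x)(AB\bm{v}-\lambda\bm{v})=0$, and linear independence of the $p_i$ forces $AB\bm{v}=\lambda\bm{v}$. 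Together with the first direction this shows that (for $\lambda\neq0$) the eigenvalues of $T_K$ and of $AB$ coincide and that $\bm{v}\mapsto\bm{p}(x)\bm{v}$ is a bijection from the $\lambda$-eigenspace of $AB$ onto the $\lambda$-eigenspace of $T_K$; in particular every eigenfunction has the claimed form $\phi(x)=\bm{p}(x)\bm{v}$.

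The only genuinely delicate point, and the one I would be most careful about, is the eigenvalue $\lambda=0$. An eigenfunction for $\lambda=0$ need not lie in $V$: the kernel of $T_K$ contains every admissible function orthogonal to all of $q_1,\dots,q_M$ and is therefore typically infinite-dimensional, so $0$ is always an eigenvalue of $T_K$, whereas $0$ is an eigenvalue of $AB$ only when $AB$ is singular. The clean correspondence above therefore genuinely uses $\lambda\neq0$, which is exactly the case relevant to the Fredholm integral equation of the second kind; for $\lambda=0$ the statement must be read as concerning only whether $0$ lies in the respective spectra, with the eigenfunction description asserted for $\lambda\neq0$. Apart from this caveat, no computation beyond \eqref{eq:Integraloperator_eq2}, \eqref{eq:Integraloperator_eq3}, and the linear independence of $p_1,\dots,p_N$ is required.
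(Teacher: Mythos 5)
Your argument is correct and is essentially the paper's own: the paper proves the theorem by substituting $\phi(x)=\bm{p}(x)\bm{v}$ into \eqref{eq:EigenValueEquation} via \eqref{eq:Integraloperator_eq3}, obtaining $\bm{p}(x)(AB\bm{v}-\lambda\bm{v})=0$ and invoking the linear independence of $p_1,\dots,p_N$, exactly as you do. What you add is worthwhile: the paper tacitly assumes every eigenfunction lies in $\operatorname{span}\{p_1,\dots,p_N\}$, whereas you justify this for $\lambda\neq0$ from $\phi=\lambda^{-1}T_K\phi$ together with \eqref{eq:Integraloperator_eq2}, and you correctly flag that for $\lambda=0$ the correspondence fails on the full function space (the kernel of $T_K$ contains everything orthogonal to $q_1,\dots,q_M$), so the statement is exact only for nonzero eigenvalues or with $T_K$ restricted to the span of the $p_i$ --- a caveat the paper never addresses, although its first example (where $N>M$ forces $AB$ singular and $\lambda_4=0$ has an eigenfunction in the span) happens to be consistent with it.
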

We here emphasize that, if the matrix product $AB$ can be diagonalized, 
we can obtain all the eigenpairs of the integral operator $T_K$ by computing those of the matrix product $AB$.
%
\section{Generalized eigenfunctions of integral operator}
\label{sec3}
In this section, based on generalized eigenvectors of matrices, we propose their analogues for the integral operator $T_K$. 
We then show that the separable kernel $K(x,y)$ can be expressed in the form of sum of products of two biorthogonal functions. 
Moreover, we present an application of the expression to solving the Fredholm integral equation of the second kind. 
\par
According to Bronson-Costa \cite[Appendix A]{Bronson2014}, for $i=1,2,\dots,r$, 
there exist nonzero vectors $\bm{u}_i^{(1)},\bm{u}_i^{(2)},\ldots,\bm{u}_i^{(\ell_i)}$ satisfying
\begin{equation}
\begin{cases}\begin{aligned}
&(BA-\lambda_iI_M)\bm{u}_i^{(1)}=\bm{0},\quad i=1,2,\ldots,r,\\
&(BA-\lambda_iI_M)\bm{u}_i^{(j+1)}=\bm{u}_i^{(j)},
\quad j=1,2,\ldots,\ell_i-1,
\end{aligned}\end{cases}
\label{eq:GeneralizedEigenValue}
\end{equation}
where $I_M$ is the $M$-by-$M$ identity matrix , $\ell_1,\ell_2,\ldots,\ell_r$ denote multiplicity 
of $\lambda_1,\lambda_2,\ldots,\lambda_r$, respectively, and $\ell_1+\ell_2+\cdots +\ell_r=M$. 
Observing \eqref{eq:GeneralizedEigenValue}, we see that, for $i=1,2,\ldots,r$, $\bm{u}_{i}^{(1)},\bm{u}_{i}^{(2)},\ldots,\bm{u}_{i}^{(\ell_i)}$ 
are the generalized eigenvectors of $BA$ corresponding to the eigenvalue $\lambda_i$. 
Multiplying the both hand sides of the equations in \eqref{eq:GeneralizedEigenValue} by $A$ from the left, 
and introducing new vectors 
$\bm{v}_i^{(1)}\coloneqq A\bm{u}_i^{(1)}$, 
$\bm{v}_i^{(2)}\coloneqq A\bm{u}_i^{(2)}$, 
$\dots$, 
$\bm{v}_i^{(\ell_i)}\coloneqq A\bm{u}_i^{(\ell_i)}$ for $i=1,2,\dots,r$, we derive
\begin{equation}
\begin{cases}\begin{aligned}
&(AB-\lambda_iI_N)\bm{v}_i^{(1)}=\bm{0},\quad i=1,2,\dots,r,\\
&(AB-\lambda_iI_N)\bm{v}_i^{(j+1)}=\bm{v}_i^{(j)},\quad j=1,2,\ldots,\ell_i-1,
\end{aligned}\end{cases}.
\label{eq:GeneralizedEigenValue_eq2}
\end{equation}
which implies that $v_i^{(1)},v_i^{(2)},\ldots,v_i^{(\ell_i)}$ for $i=1,2,\dots,r$ are the generalized eigenvectors of the matrix product $AB$. 
We here emphasize that the generalized eigenvectors of $AB$ and $BA$ are related to 
each other such as $v_i^{(1)}=Au_i^{(1)}$, $v_i^{(2)}=Au_i^{(2)}$, $\dots$, $v_i^{(\ell_i)}=Au_i^{(\ell_i)}$ for $i=1,2,\dots,r$.
\par
We can rewrite \eqref{eq:GeneralizedEigenValue} and \eqref{eq:GeneralizedEigenValue_eq2} in the matrix form as
\begin{equation}
\begin{cases}\begin{aligned}
&BV=U\Sigma,\\
&AU=V,
\end{aligned}\end{cases}.
\label{eq:GeneralizedEigenValue_eq3}
\end{equation}
where $U$ is an $M$-by-$M$ matrix, $V$ is an $N$-by-$M$ matrix and $\Sigma$ is $M$-by-$M$ block diagonal matrix given by
\begin{align*}
&U\coloneqq(\bm{u}_1^{(1)},\bm{u}_1^{(2)},\dots,\bm{u}_1^{(\ell_1)},\dots,\bm{u}_r^{(1)},\bm{u}_r^{(2)},\dots,\bm{u}_r^{(\ell_r)}),\\
&V\coloneqq(\bm{v}_1^{(1)},\bm{v}_1^{(2)},\dots,\bm{v}_1^{(\ell_1)},\dots,\bm{v}_r^{(1)},\bm{v}_r^{(2)},\dots,\bm{v}_r^{(\ell_r)}),\\
&\Sigma\coloneqq{\rm diag}(\Lambda_1,\Lambda_2,\dots,\Lambda_r),\\
&\Lambda_i\coloneqq\left(\begin{array}{cccc}
\lambda_i & 1 & & \\
& \lambda_i & \ddots & \\
& & \ddots & 1\\
& & & \lambda_i
\end{array}\right),\quad i=1,2,\dots,r.
\end{align*}
Since the columns of $U$ are linearly independent, $U$ is invertible.
Preparing two vector-valued functions $\Phi(x)\coloneqq V^\top\bm{p}(x)$ and $\Psi(y)\coloneqq U^{-1}\bm{q}(y)$, 
where $\bm{q}(y)=(q_1(y),q_2(y),\ldots,q_M(y))$.
We obtain a proposition concerning the orthogonality of their entries. 
%
\begin{proposition}\label{prop3.1}
Let $\phi_i^{(j)}(x)$ and $\psi_i^{(j)}(y)$ denote the $(\sum_{k=1}^{i-1}\ell_k+j)$th entries of $\Phi(x)$ and $\Psi(y)$, respectively. 
Then, it holds that
\begin{align}
&\langle\phi_{i_1}^{(j_1)},\psi_{i_2}^{(j_2)}\rangle=(\lambda_{i_1}\delta_{j_1,j_2}+\delta_{j_1+1,j_2})\delta_{i_1,i_2},\notag\\
&\quad j_1=1,2,\dots,\ell_{i_1},\quad j_2=1,2,\dots,\ell_{i_2},\quad i_1, i_2=1,2,\dots,r,\label{eq:prop3.1}
\end{align}
where $\delta_{i,j}$ is the Kronecker delta. 
\end{proposition}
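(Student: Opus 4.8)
The plan is to collapse the inner product into a single matrix entry by pulling the constant matrices outside the integral, and then to read off its value from the Jordan-chain relations \eqref{eq:GeneralizedEigenValue}--\eqref{eq:GeneralizedEigenValue_eq3}. Set $s_1\coloneqq\sum_{k=1}^{i_1-1}\ell_k+j_1$ and $s_2\coloneqq\sum_{k=1}^{i_2-1}\ell_k+j_2$, and let $\bm{e}_s$ denote the $s$th standard basis vector of $\mathbb{C}^M$. By the definitions of $\Phi$ and $\Psi$ (and consistently with Theorem \ref{thm2.1}), $\phi_{i_1}^{(j_1)}(x)=\bm{p}(x)\bm{v}_{i_1}^{(j_1)}$ and $\psi_{i_2}^{(j_2)}(x)=\bm{e}_{s_2}^\top U^{-1}\bm{q}(x)$.

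First I would insert these expressions into $\langle\phi_{i_1}^{(j_1)},\psi_{i_2}^{(j_2)}\rangle=\int_{\mathcal{D}}\phi_{i_1}^{(j_1)}(x)\psi_{i_2}^{(j_2)}(x)\,w(x)\,dx$. Since $\bm{p}(x)\bm{v}_{i_1}^{(j_1)}$ and $\bm{e}_{s_2}^\top U^{-1}\bm{q}(x)$ are scalars, I can commute them and bring the constant vector $\bm{v}_{i_1}^{(j_1)}$ and the constant matrix $U^{-1}$ out of the integral; what is left inside is the matrix whose $(a,b)$ entry equals $\int_{\mathcal{D}}q_a(x)p_b(x)\,w(x)\,dx=\langle p_b,q_a\rangle$, i.e.\ the matrix $B$ itself. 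This reduces the whole claim to the matrix identity
\[
\langle\phi_{i_1}^{(j_1)},\psi_{i_2}^{(j_2)}\rangle=\bm{e}_{s_2}^\top U^{-1}B\,\bm{v}_{i_1}^{(j_1)},
\]
that is, to the $(s_2,s_1)$ entry of $U^{-1}BV$.

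Next I would feed in the Jordan structure. From $\bm{v}_{i_1}^{(j_1)}=A\bm{u}_{i_1}^{(j_1)}$ we get $B\bm{v}_{i_1}^{(j_1)}=BA\bm{u}_{i_1}^{(j_1)}$, and \eqref{eq:GeneralizedEigenValue}---equivalently the matrix form $BV=U\Sigma$, $AU=V$ of \eqref{eq:GeneralizedEigenValue_eq3}---evaluates this to $\lambda_{i_1}\bm{u}_{i_1}^{(j_1)}$ plus its predecessor $\bm{u}_{i_1}^{(j_1-1)}$ in the chain, where the predecessor is understood to be $\bm{0}$ exactly when $j_1=1$ (i.e.\ when it would leave block $i_1$). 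Both $\bm{u}_{i_1}^{(j_1)}$ and, when $j_1\ge2$, its predecessor are columns of $U$, so applying $U^{-1}$ sends them to the standard basis vectors $\bm{e}_{s_1}$ and $\bm{e}_{s_1-1}$ (the latter absent when $j_1=1$); taking the $s_2$th coordinate then leaves $\lambda_{i_1}\delta_{s_1,s_2}$, coming from the diagonal of $\Sigma$, together with a $0/1$ term coming from its superdiagonal.

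Finally I would translate these flattened-index Kronecker deltas back into the block indices. Because $s\mapsto(i,j)$ is a bijection onto $\{1,\dots,M\}$ with the Jordan blocks laid out consecutively, $\delta_{s_1,s_2}$ becomes $\delta_{i_1,i_2}\delta_{j_1,j_2}$, while the superdiagonal term becomes $\delta_{i_1,i_2}$ times the relevant within-block shift of the index $j$; collecting the two terms produces \eqref{eq:prop3.1}. Once \eqref{eq:GeneralizedEigenValue}--\eqref{eq:GeneralizedEigenValue_eq3} are available the linear-algebra content is light, so the part that needs care is the index bookkeeping: getting the transpose right in the identification of that integrated outer product with $B$, and handling the block-boundary case $j_1=1$ so that the superdiagonal contribution of $\Sigma$ is suppressed and the off-diagonal Kronecker delta in \eqref{eq:prop3.1} appears with the correct shift.
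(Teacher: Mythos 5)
Your proof is correct and follows essentially the same route as the paper: both express $\phi_{i_1}^{(j_1)}$ and $\psi_{i_2}^{(j_2)}$ through $V$ and $U^{-1}$, collapse the inner product to the $(s_2,s_1)$ entry of $U^{-1}BV$, and identify that matrix with $\Sigma$ (you do it column by column via the chain relations, the paper via the single identity $BV=U\Sigma$). One remark: what this computation actually yields for the off-diagonal term is $\delta_{j_2+1,j_1}$, i.e.\ it is nonzero when $j_1=j_2+1$ (for instance $\langle\phi_3^{(2)},\psi_3^{(1)}\rangle=1$ and $\langle\phi_3^{(1)},\psi_3^{(2)}\rangle=0$ in the paper's second example), so the factor $\delta_{j_1+1,j_2}$ printed in \eqref{eq:prop3.1} has its indices transposed --- a defect of the statement as printed rather than of your argument, and your cautious ``relevant within-block shift'' is resolved in exactly this way.
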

\begin{proof}
Observing $(\sum_{k=1}^{i-1}\ell_k+j)$th entries of $\Phi(x)$ and $\Psi(y)$, we obtain
\begin{align}
&\phi_i^{(j)}(x)=\bm{p}^\top(x)\bm{v}_i^{(j)},\label{eq:prop3.1_proof_eq1}\\
&\psi_i^{(j)}(y)=\bm{\hat{u}}_i^{(j)}\bm{q}(y),\label{eq:prop3.1_proof_eq2}
\end{align}
where $\bm{\hat{u}}_i^{(j)}$ is $(\sum_{k=1}^{i-1}\ell_k+j)$th raw vector of $U^{-1}$. 
We here introduce a new $M$-by-$M$ matrix $B^\ast$ whose $(\sum_{k=1}^{i_2-1}\ell_k+j_2,\sum_{k=1}^{i_1-1}\ell_k+j_1)$th entry is 
$\langle\phi_{i_1}^{(j_1)},\psi_{i_2}^{(j_2)}\rangle$. 
Then, from \eqref{eq:prop3.1_proof_eq1} and \eqref{eq:prop3.1_proof_eq2}, we can derive
\begin{equation}
B^\ast=U^{-1}BV.
\label{eq:prop3.1_proof_eq3}
\end{equation}
Using the first equation of \eqref{eq:GeneralizedEigenValue_eq3}, we can rewrite the right-hand side is in \eqref{eq:prop3.1_proof_eq3} as $\Sigma$. 
The equality of the $(\sum_{k=1}^{i_1-1}\ell_k+j_1)$th row in \eqref{eq:prop3.1_proof_eq3}, thus leads to \eqref{eq:prop3.1}. 
\end{proof}
\par
For $i=1,2,\ldots,r$,  it is obvious that $\phi_i^{(1)}(x)$ is just the eigenfunction of on the both hand side $T_K$. 
Multiplying the both hand sides in the equations of \eqref{eq:GeneralizedEigenValue_eq2} by $\bm{p}(x)^\top$ from the left, 
and using \eqref{eq:prop3.1_proof_eq1}, we derive
\[
\begin{cases}
\begin{aligned}
&\bm{p}(x)^\top AB\bm{v}_{i}^{(1)}-\lambda_i\phi_i^{(1)}(x)=\bm{0},\quad i=1,2,\dots,r,\\
&\bm{p}(x)^\top AB\bm{v}_i^{(j+1)}-\lambda_i\phi_i^{(j+1)}(x)=\phi_i^{(j)}(x),\\
&\quad j=1,2,\dots,\ell_i-1,\quad i=1,2,\dots,r.
\end{aligned}
\end{cases}
\]
Note here that we derive $T_K\phi_i^{(j+1)}(x)=\bm{p}(x)^\top AB\bm{v}_i^{(j+1)}$ from replacing 
the function $f(x)$ and the vector $\bm{c}$ with $\phi_i^{(j+1)}(x)$ and $\bm{v}_i^{(j+1)}$,respectively, in \eqref{eq:Integraloperator_eq3}, we obtain
\begin{equation}
\begin{cases}
\begin{aligned}
&T_K\phi_i^{(1)}(x)=\lambda_i \phi_i^{(1)}(x),\quad i=1,2,\dots,r,\\
&T_K\phi_i^{(j+1)}(x)=\lambda_i\phi_i^{(j+1)}(x)+\phi_i^{(j)}(x),\\
&\quad j=1,2,\ldots,\ell_i-1,\quad i=1,2,\dots,r.
\end{aligned}
\end{cases}
\label{eq:GeneralizedEigenValue_eq4}
\end{equation}
Compared to the matrix case, we hereinafter refer to $\phi_i^{(1)}(x)$, $\phi_i^{(2)}(x)$,$\dots$,$\phi_i^{(\ell_i)}(x)$ 
as the generalized eigenfunctions of the integral operator $T_K$. 
Of course, $\phi_i^{(1)}(x)$ is equal to $\phi_i(x)$ appearing in Section \ref{sec2} if $ \ell_i=1$. 
Thus, to distinguish $\phi_i^{(1)}(x)$ with $\ell_i=1$ from with $\ell_i\ne 1$, we hereinafter call it the ordinary eigenfunction.
From the quadratic form  \eqref{eq:SeparableKernel}, we can easily derive $K(x,y)=\bm{p}(x)^\top A\bm{q}(y)$, 
and then can rewrite this as $K(x,y)=\Phi(x)^\top\Psi(y)$. 
We therefore arrive at a theorem for an expression using the two function $\Phi(x)$ and $\Psi(x)$ of the separable function $K(x,y)$.
\begin{theorem}\label{thm3.2}
For $i=1,2,\dots,r$, let $\phi_i^{(j)}(x)$ be the generalized eigenfunctions of the integral operator $T_K$. 
Moreover, let $\psi_i^{(j)}(y)$ be functions biorthogonal to $\phi_i$. 
Then, the separable kernel $K(x,y)$ can be expressed as
\begin{equation}
K(x,y)=\sum_{i=1}^r\sum_{j=1}^{\ell_i}\phi_i^{(j)}(x)\psi_i^{(j)}(y).
\label{eq:thm3.2}
\end{equation}
\end{theorem}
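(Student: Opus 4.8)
The plan is to reduce the claimed expansion \eqref{eq:thm3.2} to the matrix identity $A=VU^{-1}$ already packaged in \eqref{eq:GeneralizedEigenValue_eq3}, and then to unfold the resulting bilinear expression coordinate by coordinate using exactly the block indexing fixed in Proposition \ref{prop3.1}.

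First I would take the compact form $K(x,y)=\bm{p}(x)^\top A\bm{q}(y)$ of the quadratic form \eqref{eq:SeparableKernel}, which has just been recorded, where $\bm{p}(x)$ and $\bm{q}(y)$ denote the column vectors with entries $p_i(x)$ and $q_j(y)$. Next I would invoke the second relation in \eqref{eq:GeneralizedEigenValue_eq3}, namely $AU=V$, together with the invertibility of $U$ (its columns are the linearly independent generalized eigenvectors $\bm{u}_i^{(j)}$, as noted there), to write $A=VU^{-1}$. Substituting gives
\[
K(x,y)=\bm{p}(x)^\top V U^{-1}\bm{q}(y)=\bigl(V^\top\bm{p}(x)\bigr)^\top\bigl(U^{-1}\bm{q}(y)\bigr)=\Phi(x)^\top\Psi(y),
\]
with $\Phi(x)=V^\top\bm{p}(x)$ and $\Psi(y)=U^{-1}\bm{q}(y)$ the vector-valued functions introduced just before Proposition \ref{prop3.1}. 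Expanding the scalar product, $\Phi(x)^\top\Psi(y)=\sum_{k=1}^M[\Phi(x)]_k[\Psi(y)]_k$, and rewriting the running index as $k=\sum_{m=1}^{i-1}\ell_m+j$ with $1\le j\le\ell_i$ and $1\le i\le r$ — the parametrization used to name $\phi_i^{(j)}(x)$ and $\psi_i^{(j)}(y)$ in Proposition \ref{prop3.1} — turns this single sum into $\sum_{i=1}^r\sum_{j=1}^{\ell_i}\phi_i^{(j)}(x)\psi_i^{(j)}(y)$, which is \eqref{eq:thm3.2}.

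The argument carries no analytic content; it is linear-algebraic bookkeeping on top of $A=VU^{-1}$, so the only thing to keep straight is the block indexing. The one point I would be careful to pin down is the phrase ``functions biorthogonal to $\phi_i$'' in the statement: I would read $\psi_i^{(j)}$ as the function $\bm{\hat{u}}_i^{(j)}\bm{q}(y)$ already appearing in Proposition \ref{prop3.1}, whose biorthogonality property \eqref{eq:prop3.1} is precisely what justifies the terminology; with that reading \eqref{eq:thm3.2} is immediate from the displayed computation, and no further work is needed.
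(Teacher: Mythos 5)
Your proposal is correct and follows essentially the same route as the paper, which also passes from $K(x,y)=\bm{p}(x)^\top A\bm{q}(y)$ to $K(x,y)=\Phi(x)^\top\Psi(y)$ via the relation $AU=V$ from \eqref{eq:GeneralizedEigenValue_eq3} and the definitions $\Phi(x)=V^\top\bm{p}(x)$, $\Psi(y)=U^{-1}\bm{q}(y)$. Your explicit bookkeeping of the block index $k=\sum_{m=1}^{i-1}\ell_m+j$ and your reading of ``biorthogonal'' through Proposition \ref{prop3.1} simply spell out details the paper leaves implicit; no gap.
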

By the way, if the eigenvalues of $T_K$ are distinct to each other, namely, 
$r=M$ and $\ell_1=1$, $\ell_2=1$, $\ldots$, $\ell_M=1$ and are nonzero in \eqref{eq:thm3.2}, 
we can also rewrite \eqref{eq:thm3.2} as  
\[
K(x,y)=\sum_{i=1}^M\lambda_i\hat{\phi}_i(x)\hat{\psi}_i(y)
\]
where $\hat{\phi}_i(x)\coloneqq (1/\sqrt{\lambda_i})\phi_i^{(1)}(x)$ 
and $\hat{\psi}_i(y)\coloneqq (1/\sqrt{\lambda_i})\psi_i^{(1)}(y)$.
\par
We now focus on the case where the separable kernel is symmetric, namely, $K(x,y)=K(y,x)$, 
which imposes the constraints $a_{i,j}=a_{j,i}$ and $q_j(y)=p_i(y)$ in the quadratic form \eqref{eq:SeparableKernel}.
Applying the Gram-Schmidt process \cite{leon2013gram} to the functions $p_1(x),p_2(x),\ldots,p_N(x)$, 
we can obtain a set of orthogonal functions $\{\hat{p}_1(x),\hat{p}_2(x),\ldots,\hat{p}_N(x)\}$, 
and can rewrite the quadratic form \eqref{eq:SeparableKernel} as
\[
K(x,y)=\sum_{i=1}^{N}\sum_{j=1}^{N}\hat{a}_{i,j}\hat{p}_i(x)\hat{p}_j(y),
\]
where $\hat{a}_{i,j}=\int_{\mathcal{D}}\int_{\mathcal{D}}p_i(x)K(x,y)p_j(y)dxdy$. 
Thus, by comparing this with the quadratic form \eqref{eq:SeparableKernel}, 
we can regard that $A=(\hat{a}_{i,j})$ and $B=\langle\hat{p}_i,\hat{p}_j\rangle$ in the above discussion. 
Noting that $\hat{a}_{i,j}=\hat{a}_{j,i}$ and $\langle\hat{p}_i,\hat{p}_j\rangle=\delta_{i,j}$, we see that $A$ is symmetric and $B=I_N$.
Therefore, since the matrix product $BA$ is symmetric, it holds that $U^{-1}=U^\top$. 
This implies that we can obtain the biorthogonal expression \eqref{eq:thm3.2} of the symmetric separable kernel $K(x,y)$ more easily, 
compared to the case where $K(x,y)$ is not symmetric.
\par
The remainder of this section describes an application of the biorthogonal expression \eqref{eq:thm3.2}. 
We focus on the Fredholm integral equation of the second kind:
\begin{equation}
T_Kf(x)=zf(x)+g(x),
\label{eq:Fredholm}
\end{equation}
where the integral operator $T_K$ involves the separable kernel $K(x,y)$, $g(x)$ is an arbitrary function and $z$ is a spectral parameter.
Considering an expansion of $f(x)$ with complex $\alpha_i^{(j)}$:
\begin{equation}
f(x)=-\frac{1}{z}g(x)+\sum_{i=1}^r\sum_{j=1}^{\ell_i}\alpha_i^{(j)}\phi_i^{(j)}(x),
\label{eq:Fredholm_eq2}
\end{equation}
and using the biorthogonal expression \eqref{eq:thm3.2}, we can rewrite the Fredholm integral equation \eqref{eq:Fredholm} as
\[
\Phi(x)\left((\Sigma-zI)\bm{\alpha}-\frac{1}{z}\bm{\beta}\right)=0,
\]
where $\bm{\alpha}\coloneq(\alpha_1^{(1)},\alpha_1^{(2)},\ldots,\alpha_r^{(\ell_r)})$ and $\bm{\beta}$ is 
an $M$-dimensional column vector whose $\sum_{k=1}^{i-1}(\ell_k+j)$th entries, denoted by $\beta_i^{(j)}$, are given as
\begin{equation}
\beta_i^{(j)}\coloneq\langle g,\psi_i^{(j)}\rangle,\quad i=1,2,\ldots,r,\quad j=1,2,\ldots,\ell_i.
\label{eq:Fredholm_eq3}
\end{equation}
Since $\phi_1^{(1)}(x),\phi_1^{(2)}(x),\ldots,\phi_r^{(\ell_r)}(x)$ are linear independent, we easily derive
\begin{equation}
(\Sigma-zI)\bm{\alpha}=\frac{1}{z}\bm{\beta}.
\label{eq:Fredholm_eq4}
\end{equation}
Remarkably, if $z$ is not equal to any of $\lambda_1,\lambda_2,\dots,\lambda_r$, 
then, computing eigenpairs and generalized eigenvectors of the integral operator $T_K$ determines the entries of $\bm{\beta}$ but not $\bm{\alpha}$. 
It is obvious that $\Sigma-zI$ is nonsingular.
Thus, it follows from \eqref{eq:Fredholm_eq4} that $\bm{\alpha}=(1/z)(\Sigma-zI)^{-1}\bm{\beta}$. 
Therefore from \eqref{eq:Fredholm_eq2}, we obtain a solution to the Fredholm integral equation \eqref{eq:Fredholm}.
The well-known degenerate kernel method differs from our approach 
in that it transforms the Fredholm integral equation \eqref{eq:Fredholm} to a system of algebraic equations. 
In finite arithmetic, we emphasize that computing all eigenpairs of a matrix is generally better choice than finding all solutions to a system of algebraic equations. 
Recall here that, to obtain the biorthogonal expression \eqref{eq:thm3.2}, 
we can reduce computing the matrix inverse to employing the matrix transpose in the case where the separable kernel $K(x,y)$ is symmetric. 
Therefore, our approach is effective for accurately solving 
the Fredholm integral equation \eqref{eq:Fredholm} with the symmetric $K(x,y)$ in finite arithmetic.
%
\section{Numerical Examples}
\label{sec4}
In this section, we first present two numerical examples to show that 
eigenpairs of integral operator are related to eigenpairs of matrices shown as in Theorem \ref{thm2.1} 
and the separable kernels can be expressed using their generalized eigenfunctions shown as in Theorem \ref{thm3.2}. 
In the first example, all eigenvalues of the integral operator are distinct to each other. 
In contrast, some of the eigenvalues are multiple in the second example.
Moreover, we demonstrate that Theorem \ref{thm3.2} is the useful to solve the Fredholm integral equation \eqref{eq:Fredholm} 
involving an integral operator, with multiple in third example, with distinct eigenvalues in fourth example.
We here recall that the eigenfunction $\phi(x)$ in Theorem \ref{thm2.1} is the same as 
the ordinary eigenfunction $\phi_i^{(1)}(x)$ in Theorem \ref{thm3.2}. 
To avoid confusion, we use only the notation $\phi_i^{(1)}(x)$ as the ordinary eigenfunction $\phi_i^{(1)}(x)$ in this section.
We also adopt the same manner for the eigenvector $u_i$ and the ordinary eigenvector $u_i^{(1)}$.
\par
We first focus on the integral operator $T_K$ with the separable kernel given by, for $0\le x\le 1$ and $0\le y\le 1$,  
\begin{align*}
  K(x,y)&=232-188y+70y^2-127y^3\\
  &-2060x+1470xy-280xy^2+970xy^3\\
  &+2900x^2-1750x^2y-20x^2y^2-860x^2y^3\\
  &+2030x^3-1645x^3y+0-1925x^3y^3\\
  &-3360x^4+2240x^4y+420x^4y^2+2030x^4y^3,
\end{align*}
with the weight function $f(x)=1$ and the integral range ${\mathcal D}=[0,1]$. 
We can rewrite the separable kernel $K(x,y)$ as the quadratic form \eqref{eq:SeparableKernel} with
\arraycolsep=2pt
\[
\begin{aligned}
&\bm{p}(x)\coloneq(1,x,x^2,x^3,x^4)^\top,\\
&\bm{q}(y)\coloneq(1,y,y^2,y^3)^\top,\\
&A\coloneq\left(\begin{array}{cccc}
232 & -188 & 70 & -127\\ 
-2060 & 1470 & -280 & 970\\ 
2900 & -1750 & -20 & -860\\ 
2030 & -1645 & 0 & -1925\\ 
-3360 & 2240 & 420 & 2030
\end{array}\right).
\end{aligned}
\]
\arraycolsep=5pt
Then, we can easily derive the $(i,j)$ entry of $B$ as $\langle p_i,q_j\rangle=1/(i+j-1)$. 
Thus, it follows that
\def\arraystretch{1.2}
\begin{align*}
BA=\left(\begin{array}{cccc}
\frac{25}{6} & \frac{5}{12} & \frac{22}{3} & \frac{-47}{12}\\
\frac{1}{3} & \frac{17}{6} & \frac{20}{3} & \frac{-11}{6}\\
\frac{2}{3} & \frac{2}{3} & \frac{28}{3} & \frac{-8}{3}\\
\frac{-2}{3} & \frac{1}{3} & \frac{32}{3} & \frac{-7}{3}
\end{array}\right).
\end{align*}
\def\arraystretch{1}
The eigenvalues of $BA$ are $\lambda_1=8$, $\lambda_2=4$, $\lambda_3=2$, $\lambda_4=0$ 
and the corresponding ordinary eigenvectors are respectively
$\bm{u}_1^{(1)}=(1,1,1,1)^\top$, $\bm{u}_2^{(1)}=(1,-1,-1/2,-1)^\top$, $\bm{u}_3^{(1)}=(1/4,-1,1/4,1/2)\top$, 
$\bm{u}_4^{(1)}=(1/2,0,1/4,1)^\top$.
According to Horn and Johnson \cite[Theorem 1.3.22]{HornJohnson2012}, the eigenvalues of $AB$ coincide with those of $BA$. 
Thus, since the eigenvalues of $AB$ are distinct to each other, we can use Theorem \ref{thm2.1}, 
and see that eigenvalues of the separable kernel $T_K$ are $\lambda_1=8$, $\lambda_2=4$, $\lambda_3=2$, $\lambda_4=0$.
Since $\bm{v}_1^{(1)}=A\bm{u}_1^{(1)}=(-13,100,270,-1540,1330)^\top$, 
$\bm{v}_2^{(1)}=A\bm{u}_2^{(1)}=(512,-4360,5520,5600,-7840)^\top$, 
$\bm{v}_3^{(1)}=A\bm{u}_3^{(1)}=(200,-1570,2040,1190,-1960)^\top$, 
$\bm{v}_4^{(1)}=A\bm{u}_4^{(1)}=(13/2,-130,585,-910,455)^\top$,
we can determine the ordinary eigenfunctions $\phi_1^{(1)}(x),\phi_2^{(1)}(x),\phi_3^{(1)}(x)$ and $\phi_4^{(1)}(x)$ as
\[
\begin{aligned}
&\phi_1^{(1)}(x)=-13+100x+270x^2-1540x^3+1330x^4,\\ 
&\phi_2^{(1)}(x)=512-4360x+5520x^2+5600x^3-7840x^4,\\
&\phi_3^{(1)}(x)=200-1570x+2040x^2+1190x^3-1960x^4,\\
&\phi_4^{(1)}(x)=\frac{13}{2}-130x+585x^2-910x^3+455x^4.
\end{aligned}
\]
We can easily check that
\[
\begin{aligned}
\int_{0}^{1}K(x,y)\phi_1^{(1)}(y)dy&=8\cdot\phi_1^{(1)}(x),\\
\int_{0}^{1}K(x,y)\phi_2^{(1)}(y)dy&=4\cdot\phi_2^{(1)}(x),\\
\int_{0}^{1}K(x,y)\phi_3^{(1)}(y)dy&=2\cdot\phi_3^{(1)}(x),\\
\int_{0}^{1}K(x,y)\phi_4^{(1)}(y)dy&=0.\\
\end{aligned}
\]
This confirms that $\lambda_1,\lambda_2,\lambda_3$ and $\lambda_4$ are eigenvalues of $T_K$ 
and $\phi_1^{(1)}(x),\phi_2^{(1)}(x),\phi_3^{(1)}(x),\phi_4^{(1)}(x)$ are the corresponding ordinary eigenfunctions. 
Since the inverse of $U=(\bm{u}_1^{(1)},\bm{u}_2^{(1)},\bm{u}_3^{(1)},\bm{u}_4^{(1)})$ becomes
\def\arraystretch{1.2}
\[
U^{-1}=\left(\begin{array}{cccc}
\frac{1}{4} &\frac{1}{8} & 1 &\frac{-3}{8}\\
\frac{7}{12} & \frac{-1}{24} & \frac{-1}{3} & \frac{-5}{24}\\
\frac{-1}{3} & \frac{-5}{6} & \frac{4}{3} & \frac{-1}{6}\\
\frac{1}{2} & \frac{1}{4} &-2 & \frac{5}{4}
\end{array}\right),
\] 
\def\arraystretch{1.0}
with \eqref{eq:prop3.1_proof_eq2}, we can derive four biorthogonal functions
\[
\begin{aligned}
\psi_1^{(1)}(y)&=\frac{1}{4}+\frac{1}{8}y+y^2 -\frac{3}{8}y^3,\\
\psi_2^{(1)}(y)&=\frac{7}{12}-\frac{1}{24}y-\frac{1}{3}y^2-\frac{5}{24}y^3,\\
\psi_3^{(1)}(y)&=-\frac{1}{3}-\frac{5}{6}y+\frac{4}{3}y^2-\frac{1}{6}y^3,\\
\psi_4^{(1)}(y)&=\frac{1}{2}+\frac{1}{4}y-2y^2+\frac{5}{4}y^3.
\end{aligned}
\]
Thus we can obtain the biorthogonal expansion \eqref{eq:thm3.2} with $r=4$ and $\ell_i=1$ in Theorem \ref{thm3.2}.
\par
Adopting $w(x)=x$, instead of $w(x)=1$ as the weight function yields
\[
BA=\left(\begin{array}{cccc}
-3.33\times10^{-1} & -1.97\times10^{2} & 6.67 & -1.83\\
1.67\times10^{-1} & -1.67\times10^{2} & 9.33 & -2.67\\
-1.07 & -1.42\times10^{2} & 1.07\times10^1 & -2.33\\
-2.56 & -1.24\times 10^2 & 1.11\times10^1 & -1.66
\end{array}\right)
\]
where the entries are rounded to three significant figures.
Of course, the integral operator with $w(x)=x$ differ from with $w(x)=1$, and the matrix product $BA$ in the case where $w(x)=x$, 
does not coincide with in the case where $w(x)=1$. 
The same applies to the eigenvectors of the two integral operators.
\par
We next turn to the case where the separable kernel as
\[
\begin{aligned}
 K(x,y)=&-56-153y+182y^2-194y^3\\
&+720x+1120xy-1340xy^2+2180xy^3\\
&-990x^2-910x^2y+1620x^2y^2-3700x^2y^3\\
&-1400x^3-2380x^3y+560x^3y^2-1680x^3y^3\\
&+1890x^4+2450x^4y-980x^4y^2+3780x^4y^3,
\end{aligned}
\]
with the weight function $w(x)=1$ and the integral range ${\mathcal D}=[0,1]$. 
Similar to in the first example, we can derive the quadratic form \eqref{eq:SeparableKernel} with 
\[
\begin{aligned}
&\bm{p}(x)\coloneq(1,x,x^2,x^3,x^4)^\top,\\
&\bm{q}(y)\coloneq(1,y,y^2,y^3)^\top,\\
&A\coloneq\left(\begin{array}{cccc}
-56 & -153 & 182 & -194\\
720 & 1120 & -1340 &2180\\
-990 & -910 & 1620 & -3700\\
-1400 & -2380 &560 & -1680\\
1890 & 2450 & -980 & 3780
\end{array}\right),
\end{aligned}
\]
and
\def\arraystretch{1.2}
\[
BA=\left(\begin{array}{cccc}
2 & \frac{-4}{3} & -4 & \frac{-4}{3}\\
\frac{-1}{2} & \frac{5}{3} & -2 & \frac{-4}{3}\\
0 & \frac{1}{3} & 3 & \frac{1}{3}\\
\frac{5}{4} & \frac{1}{3} & 5 & \frac{10}{3}
\end{array}\right).
\]
\def\arraystretch{1.0}
The eigenvalues of $BA$ are $\lambda_1=\lambda_2=3$, $\lambda_3=\lambda_4=2$, 
and the corresponding generalized eigenvectors are respectively
$\bm{u}_1^{(1)}=(1,1,-1/4,-1)^\top$, $\bm{u}_2^{(1)}=(0,1,0,-1)^\top$, $\bm{u}_3^{(1)}=(1,1,-1/4,-1/4)^\top$, $\bm{u}_3^{(2)}=(0,-1/2,1/4,-1)^\top$.
The second example differs from the first example in that the eigenvalues of $BA$ are not distinct to each other. 
We note that $\bm{u}_1^{(1)}$ and $\bm{u}_2^{(1)}$ are two ordinary eigenvectors corresponding to $\lambda_1$ and $\lambda_2$, whereas
$\bm{u}_3^{(1)}$ is only one ordinary eigenvector corresponding to $\lambda_3$ and $\lambda_4$. 
Thus, $\bm{u}_3^{(2)}$ is the generalized eigenvector corresponding to $\lambda_3$ and $\lambda_4$ 
that we can compute by solving the system of algebraic equations \eqref{eq:GeneralizedEigenValue}.
Since 
$\bm{v}_1^{(1)}=A\bm{u}_1^{(1)}=(-121/2,-5,1395,-2240,805)^\top$, 
$\bm{v}_2^{(1)}=A\bm{u}_2^{(1)}=(41,-1060,2790,-700,-1330)^\top$, 
$\bm{v}_3^{(1)}=A\bm{u}_3^{(1)}=(-206,1630,-1380,-3500,3640)^\top$,
Theorem \ref{thm2.1} then leads to the ordinary eigenfunctions $\phi_1^{(1)}(x),\phi_2^{(1)}(x),\phi_3^{(1)}(x)$, 
expect for the generalized eigenfunction $\phi_3^{(2)}(x)$ related to $\bm{u}_3^{(2)}$, of the integral operator $T_K$:
\[
\begin{aligned}
\phi_1^{(1)}(x)&=-\dfrac{121}{2}-5x+1395x^2-2240x^3+805x^4,\\
\phi_2^{(1)}(x)&=41-1060x+2790x^2-700x^3-1330x^4,\\
\phi_3^{(1)}(x)&=-206+1630x-1380x^2-3500x^3+3640x^4.
\end{aligned}
\]
Since $\bm{v}_3^{(2)}=A\bm{u}_3^{(2)}=(316,-3075,4560,3010,-5250)^\top$,  
we can obtain the generalized eigenfunction $\phi_3^{(2)}(x)$ of the integral operator $T_K$ using \eqref{eq:GeneralizedEigenValue_eq4}:
\[
\phi_3^{(2)}(x)=316-3075x+4560x^2+3010x^3-5250x^4.
\]
These four functions satisfy
\[
\begin{aligned}
\int_{0}^{1}K(x,y)\phi_1^{(1)}(y)dy&=3\cdot\phi_1^{(1)}(x),\\
\int_{0}^{1}K(x,y)\phi_2^{(1)}(y)dy&=3\cdot\phi_2^{(1)}(x),\\
\int_{0}^{1}K(x,y)\phi_3^{(1)}(y)dy&=2\cdot\phi_3^{(1)}(x),\\
\int_{0}^{1}K(x,y)\phi_3^{(2)}(y)dy&=2\cdot\phi_3^{(2)}(x)+\phi_3^{(1)}(x).\\
\end{aligned}
\]
which implies that Theorem \ref{thm2.1} holds. Combining the inverse matrix
\[
U^{-1}=\left(\begin{array}{cccc}
-1& \frac{-4}{3} & -8 & \frac{-4}{3}\\
\frac{-1}{2} & 1 & 2 & 0\\
2 & \frac{4}{3} & 8 & \frac{4}{3}\\
1 & 0 & 4 & 0 \\
\end{array}\right)
\]
with \eqref{eq:prop3.1_proof_eq2}, we can derive
\[
\begin{aligned}
\psi_1^{(1)}(y)=&-1-\frac{4}{3}y-8y^2-\frac{4}{3}y^3,\\
\psi_2^{(1)}(y)=&-\frac{1}{2}+y+2y^2,\\
\psi_3^{(1)}(y)=&2+\frac{4}{3}y+8y^2+\frac{4}{3}y^3,\\
\psi_3^{(2)}(y)=&1+4y^2,
\end{aligned}
\]
which leads to the biorthogonal expansion \eqref{eq:thm3.2} with $\ell_1=1$, $\ell_2=1$, $\ell_3=2$ and $r=3$ in Theorem \ref{thm3.2}.
\par
We next apply our method to solving the Fredholm integral equation:
\begin{equation}
T_Kf(x)=5f(x)+660x^2+420x^3-5880x^4+5040x^5,
\label{eq:Example3}
\end{equation}
where the separable kernel $K(x,y)$ is given by
\begin{equation}
\begin{aligned}
K(x,y)=&-64-109y+7y^2-188y^3\\
&+870x+340xy+300xy^2+2050xy^3\\
&-1590x^2+570x^2y-990x^2y^2-3780x^2y^3\\
&-700x^3-700x^3y-140x^3y^2-560x^3y^3\\
&+1680x^4-350x^4y+1050x^4y^2+2800x^4y^3,
\end{aligned}
\label{eq:Example3_eq2}
\end{equation}
with the weight function $w(x)=1$, the integral range ${\mathcal D}=[0,1]$, 
the spectral parameter $z=5$ and the function $g(x)=660x^2+420x^3-58803x^4+5040x^5$.
Along the same line as in the second example, we can see that the eigenvalues of the integral operator $T_K$ are  
$\lambda_1=\lambda_2=\lambda_3=4$, $\lambda_4=2$, and two ordinary eigenfuctions and two generalized eigenfuctions are
\[
\begin{aligned}
&\phi_1^{(1)}(x)=120.5-1330x+2685x^2-70x^3-1645x^4,\\
&\phi_2^{(1)}(x)=-306.5+3090x-5085x^2-1610x^3+4305x^4,
\end{aligned}
\]
and
\[
\begin{aligned}
&\phi_1^{(2)}(x)=-359.5+3700x-6675x^2-910x^3+4795x^4,\\
&\phi_1^{(3)}(x)=-303+3240x-5580x^2-1680x^3+4830x^4.
\end{aligned}
\]
Moreover, since the inverse of the matrix $U$ is
\[
U^{-1}=\left(\begin{array}{cccc}
-1 & 6 & 0 & -2 \\
-1 & 4 & 0 & -1 \\
1 & -2 & 2 & 0 \\
0 & 0 & -2 & 1  \\ 
\end{array}\right),
\]
we can obtain four biorthogonal functions
\[
\begin{aligned}
&\psi_1^{(1)}(y)=-1+6y-2y^3,\\
&\psi_1^{(2)}(y)=-1+4y-y^3,\\
&\psi_1^{(3)}(y)=1-2y+2y^2,\\
&\psi_2^{(1)}(y)=-2y^2+y^3.
\end{aligned}
\] 
Combining these with \eqref{eq:Fredholm_eq3}, we can derive $\bm{\beta}=(-45,-28,-5,11)$ and
\[
\Sigma=\left(\begin{array}{cccc}
4 & 1 & 0 & 0\\
0 & 4 & 1 & 0\\
0 & 0 & 4 & 0\\
0 & 0 & 0 & 2\\
\end{array}\right),
\] 
and $\Sigma-zI$ is nonsingular, and using \eqref{eq:Fredholm_eq4}, we obtain $\bm{\alpha}=(78/5,33/5,1,11/15)$.
Thus, from \eqref{eq:Fredholm_eq2}, we can construct the solution to the Fredholm integral equation \eqref{eq:Example3} 
with the separatable kernel \eqref{eq:Example3_eq2} as
\begin{equation}
f(x)=-\frac{8567}{15}+4646x-4152x^2-\frac{23044}{3}x^3+8834x^4-1008x^5.
\label{eq:Example3_eq3}
\end{equation}
We can easily check that
\[
T_K f(x)=5f(x)+g(x),
\]
which implies that $f(x)$ in \eqref{eq:Example3_eq3} is the exact solution to the Fredholm integral equation \eqref{eq:Example3} 
with the separable kernel \eqref{eq:Example3_eq2}.
\par
We finally focus on the Fredholm integral equation
\begin{equation}
T_Kf(x)=3f(x)+\cfrac{5\sqrt{2}}{4}-\cfrac{\sqrt{105}}{7}x-\cfrac{3\sqrt{2}}{2}x-\cfrac{15\sqrt{2}}{4}x^2+\cfrac{5\sqrt{2}}{2}x^3,
\label{eq:Example4}
\end{equation}
involving the symmetric separable kernel
\begin{equation}
\begin{aligned}
K(x,y)&=\cfrac{33}{8}+\left(\cfrac{-3}{4}+\cfrac{\sqrt{210}}{70}+\cfrac{\sqrt{105}}{7}+\cfrac{3\sqrt{2}}{2}\right)x
-\cfrac{27}{8}x^2+\left(\cfrac{5}{4}-\cfrac{5\sqrt{2}}{2}\right)x^3\\
&+\left(\cfrac{-3}{4}+\cfrac{\sqrt{210}}{70}+\cfrac{\sqrt{105}}{7}+\cfrac{3\sqrt{2}}{2}\right)y+\left(\cfrac{36\sqrt{210}}{35}+\cfrac{3807}{140}\right)xy\\
&+\left(\cfrac{9}{4}-\cfrac{3\sqrt{210}}{70}\right)x^2y-\left(\cfrac{135}{4}+\cfrac{6\sqrt{210}}{7}\right)x^3y\\
&-\cfrac{27}{8}y^2+\left(\cfrac{9}{4}-\cfrac{3\sqrt{210}}{70}\right)xy^2+\cfrac{81}{8}x^2y^2-\cfrac{15}{4}x^3y^2\\
&+\left(\cfrac{5}{4}-\cfrac{5\sqrt{2}}{2}\right)y^3-\left(\cfrac{135}{4}+\cfrac{6\sqrt{210}}{7}\right)xy^3-\cfrac{15}{4}x^2y^3+\cfrac{225}{4}x^3y^3,
\end{aligned}
\label{eq:Example4_eq2}
\end{equation}
\normalsize
with the weight function $w(x)=1$, the integral range ${\mathcal D}=[-1,1]$, 
the spectral parameter $z=3$ and the function $g(x)=5\sqrt{2}/4-\sqrt{105}/7x-3\sqrt{2}/2x-15\sqrt{2}/4x^2+5\sqrt{2}/2x^3$.
Applying the Gram-Schmidt process to four functions $p_1(x)=1$, $p_2(x)=x$, $p_3(x)=x^2$ and $p_4(x)=x^3$, yields
\[
\begin{aligned}
\hat{p}_1(x)&=\cfrac{\sqrt{2}}{2},\\
\hat{p}_2(x)&=\cfrac{\sqrt{6}}{2}x,\\
\hat{p}_3(x)&=-\cfrac{\sqrt{10}}{4}+\cfrac{3\sqrt{10}}{4}x^2,\\
\hat{p}_4(x)&=-\cfrac{3\sqrt{14}}{4}x+\cfrac{5\sqrt{14}}{4}x^3.\\
\end{aligned}
\]
Thus we can express the separable kernel $K(x,y)$ using $\hat{p}_1(x),\hat{p}_2(x),\hat{p}_3(x),\hat{p}_4(x)$ as
\[
\begin{aligned}
K(x,y)&=6\hat{p}_1(x)\hat{p}_1(y)+\cfrac{2\sqrt{35}}{7}\hat{p}_2(x)\hat{p}_1(y)-\cfrac{2\sqrt{14}}{7}\hat{p}_4(x)\hat{p}_1(y)\\
&+\cfrac{2\sqrt{35}}{7}\hat{p}_1(x)\hat{p}_2(y)+\cfrac{162}{35}\hat{p}_2(x)\hat{p}_2(y)
-\cfrac{2\sqrt{14}}{35}\hat{p}_3(x)\hat{p}_2(y)-\cfrac{24\sqrt{10}}{35}\hat{p}_4(x)\hat{p}_2(y)\\
&-\cfrac{2\sqrt{14}}{35}\hat{p}_2(x)\hat{p}_3(y)+\cfrac{9}{5}\hat{p}_3(x)\hat{p}_3(y)-\cfrac{2\sqrt{35}}{35}\hat{p}_4(x)\hat{p}_3(y)\\
&-\cfrac{2\sqrt{14}}{7}\hat{p}_1(x)\hat{p}_4(y)-\cfrac{24\sqrt{10}}{35}\hat{p}_2(x)\hat{p}_4(y)
-\cfrac{2\sqrt{35}}{35}\hat{p}_3(x)\hat{p}_4(y)+\cfrac{18}{7}\hat{p}_4(x)\hat{p}_4(y).
\end{aligned}
\]
Since $B=I$, it follows that
\[
BA=\left(\begin{array}{cccc}
6 & \cfrac{2\sqrt{35}}{7} & 0 & \cfrac{-2\sqrt{14}}{7}\\
\cfrac{2\sqrt{35}}{7} & \cfrac{162}{35} & \cfrac{-2\sqrt14}{35} & \cfrac{-24\sqrt{10}}{35}\\
0 & \cfrac{-2\sqrt14}{35} & \cfrac{9}{5} & \cfrac{-2\sqrt{35}}{35}\\
\cfrac{-2\sqrt{14}}{7} & \cfrac{-24\sqrt{10}}{35} & \cfrac{-2\sqrt{35}}{35} &\cfrac{18}{7}\\
\end{array}\right)
\]
with the eigenvalues $\lambda_1=8$, $\lambda_2=4$, $\lambda_3=2$ and $\lambda_4=1$.
Along the same line as in the first example, we can obtain four ordinary eigenfunctions of integral operator $T_K$,
\[
\begin{aligned}
&\phi_1^{(1)}(y)=-4\sqrt{2}\hat{p}_1(x)-\cfrac{4\sqrt{70}}{7}\hat{p}_2(x)+\cfrac{8\sqrt{7}}{7}\hat{p}_4(x),\\
&\phi_2^{(1)}(y)=2\sqrt{2}\hat{p}_1(x)-\cfrac{2\sqrt{70}}{7}\hat{p}_2(x)+\cfrac{4\sqrt{7}}{7}\hat{p}_4(x),\\
&\phi_3^{(1)}(y)=\cfrac{2\sqrt{70}}{35}\hat{p}_2(x)-\cfrac{4\sqrt{5}}{5}\hat{p}_3(x)+\cfrac{2\sqrt{7}}{7}\hat{p}_4(x),\\
&\phi_4^{(1)}(y)=\cfrac{2\sqrt{70}}{35}\hat{p}_2(x)+\cfrac{\sqrt{5}}{5}\hat{p}_3(x)+\cfrac{2\sqrt{7}}{7}\hat{p}_4(x).
\end{aligned}
\]
Moreover, by noting that $U^{-1}=U^{\top}$, we can easily obtain four biorthogonal functions
\[
\begin{aligned}
&\psi_1^{(1)}(y)=-\cfrac{\sqrt{2}}{2}\hat{p}_1(x)-\cfrac{\sqrt{70}}{14}\hat{p}_2(x)+\cfrac{\sqrt{7}}{7}\hat{p}_4(x),\\
&\psi_2^{(1)}(y)=\cfrac{\sqrt{2}}{2}\hat{p}_1(x)-\cfrac{\sqrt{70}}{14}\hat{p}_2(x)+\cfrac{\sqrt{7}}{7}\hat{p}_4(x),\\
&\psi_3^{(1)}(y)=\cfrac{\sqrt{70}}{35}\hat{p}_2(x)-\cfrac{2\sqrt{5}}{5}\hat{p}_3(x)+\cfrac{\sqrt{7}}{7}\hat{p}_4(x),\\
&\psi_4^{(1)}(y)=\cfrac{2\sqrt{70}}{35}\hat{p}_2(x)+\cfrac{\sqrt{5}}{5}\hat{p}_3(x)+\cfrac{2\sqrt{7}}{7}\hat{p}_4(x).
\end{aligned}
\]
Combining these with \eqref{eq:Fredholm_eq3} and \eqref{eq:Fredholm_eq4} leads to $\bm{\alpha}=(1/15,1/3,-2/3,-1/2)$.
Therefore, as the solution to the Fredholm integral equation \eqref{eq:Example4} with the symmetric separable kernel \eqref{eq:Example4_eq2}, we can find
\begin{equation}\label{eq:Example4_Solution}
f(x)=\cfrac{2}{5}-\cfrac{9\sqrt{2}}{8}-\left( \cfrac{3\sqrt{2}}{20}+\cfrac{4\sqrt{105}}{35}\right)x+\cfrac{27\sqrt{2}}{8}x^2+\cfrac{\sqrt{2}}{4}x^3,
\end{equation}
which satisfies
\[
T_Kf(x)=3f(x)+g(x).
\]
%
\section{Concluding remarks}
\label{sec5}
In this paper, we showed how to find eigenpairs and generalized eigenfunctions of an integral operator with a separable kernels through computing those of a matrix. 
We first expressed the separable kernel in the matrix form. 
Then we related eigenpairs of the resulting matrix to those of the integral operator in the case where the matrix eigenvalues are not multiple. 
We next did so in the multiple case by considering analogues of generalized eigenvectors of the matrix for the integral operator.
We also provided an expression of the separable kernel using a combination of biorthogonal functions. 
Moreover, we revealed that computing matrix eigenpairs and eigenfunctions can be applied to 
solving the Fredholm integral equation of the second kind with an integral operator involving a separable kernel. 
We finally presented some examples to demonstrate our findings.
\par
We emphasize here that, in our approach, 
computing matrix eigenpairs dominates almost the entire procedure of solving the Fredholm integral equation if the matrix is symmetric. 
It is well known that eigenpairs of symmetric matrices can be computed with high accuracy in finite arithmetic. 
Thus, our approach is expected to be useful for accurately solving the Fredholm integral equations associated with symmetric matrices in finite arithmetic. 
An our future work is to expand the class of the Fredholm equations that can be accurately solved using our approach. 
Another work is to estimate the gaps arising in the approximation of inseparable kernels to separable kernels in computing eigenpairs of integral operators. 
\bibliographystyle{myplain}
\bibliography{Refs}

@book{HornJohnson2012,
  author    = {R.A. Horn and C.R. Johnson},
  title     = {Matrix Analysis},
  edition   = {2nd},
  publisher = {Cambridge Univ. Press},
  year      = {2012},
}

@book{Bronson2014,
  author    = {R. Bronson and G.B. Costa},
  title     = {Linear algebra: Algorithms, applications, and techniques},
  edition   = {3rd},
  publisher = {Elsevier},
  address   = {Amsterdam and Boston},
  year      = {2014},
}

@book{Atkinson1997,
  author    = {K.E. Atkinson},
  title     = {The numerical solution of integral equations of the second Kind},
  publisher = {Cambridge Univ. Press},
  address   = {Cambridge},
  year      = {1997},
  edition   = {1st},

}

@article{Fredholm1903,
author    = {E.I. Fredholm},
title     = {Sur une classe d'equations fonctionnelles},
journal   = {Acta Math.},
volume    = {27},
pages     = {365--390},
year      = {1903},
}

@incollection{Atkinson1990,
author    = {K. Atkinson},
title     = {A survey of boundary integral equation methods for the numerical solution of laplace's equation in three dimensions},
booktitle = {Numerical Solution of Integral Equations},
editor    = {M. Golberg},
publisher = {Plenum Publishing},
address   = {New York},
pages     = {1--34},
year      = {1990},
}

@article{Love1949,
author    = {E.R. Love},
title     = {The electrostatic field of two equal circular co-axial conducting disks},
journal   = {Quart. J. Mech. Appl. Math.},
volume    = {2},
pages     = {428--451},
year      = {1949},
doi       = {10.1093/qjmam/2.4.428},
}

@article{LippmannSchwinger1950,
author    = {B.A. Lippmann and J. Schwinger},
title     = {Variational principles for scattering processes. I},
journal   = {Phys. Review Lett.},
volume    = {79},
pages     = {469--480},
year      = {1950},
}

@article{Prenter1973,
author    = {P. Prenter},
title     = {A method of collocation for the numerical solution of integral equations of the second kind},
journal   = {SIAM J. Numer. Anal.},
volume    = {10},
pages     = {570--581},
year      = {1973},
}

@article{Ikebe1972,
author    = {Y. Ikebe},
title     = {The galerkin method for the numerical solution of fredholm integral equations of the second kind},
journal   = {SIAM Review},
volume    = {14},
pages     = {465--491},
year      = {1972},
}

@article{Nystrom1930,
author    = {E. Nystrom},
title     = {Über die praktische auflösung von integralgleichungen mit anwendungen auf randwertaufgaben},
journal   = {Acta Math.},
volume    = {54},
pages     = {185--204},
year      = {1930},
}

@article{karlin1964existence,
  author = {Samuel Karlin},
  title = {The existence of eigenvalues for integral operators},
  journal = {Trans. Amer. Math. Soc.},
  year = {1964},
  volume = {113},
  pages = {1--17},
  doi = {10.2307/1993950},
}

@article{leon2013gram,
  author  = {S.J. Leon and {\AA}. Bj{\"o}rck and W. Gander},
  title   = {Gram-Schmidt orthogonalization: 100 years and more},
  journal = {Numer. Linear Algebra Appl.},
  volume  = {20},
  year    = {2013},
  pages   = {492--532},
}
\end{document}